\def\newaliasedtheorem#1[#2]#3{
	\newaliascnt{#1@alt}{#2}
	\newtheorem{#1}[#1@alt]{#3}
	\expandafter\newcommand\csname #1@altname\endcsname{#3}
}
\theoremstyle{plain}
\newtheorem{theorem}{Theorem}[section]
\theoremstyle{definition}
\theoremstyle{remark}
\newcommand{\R}{\mathbb{R}}
\newcommand{\N}{\mathbb{N}}
\let\altphi\phi
\let\phi\varphi
\let\varphi\altphi
\let\altphi\undefined
\newcommand{\average}{{\mathchoice {\kern1ex\vcenter{\hrule height.4pt
width 6pt
depth0pt} \kern-9.7pt} {\kern1ex\vcenter{\hrule height.4pt width 4.3pt
depth0pt}
\kern-7pt} {} {} }}
\address{\textsc{Daniela Di Donato}: 
Dipartimento di Ingegneria Industriale e Scienze Matematiche, Via Brecce Bianche, 12 60131 Ancona, Universit\'a Politecnica delle Marche.}
\email{d.didonato@staff.univpm.it}
\title{The intrinsic Hopf-Lax semigroup vs. The intrinsic slope}
\date{\today}
\author{ Daniela Di Donato }
\begin{document}

\begin{abstract}

		In this note, we introduce a natural notion of intrinsic Hopf-Lax semigroup in the context of the so-called intrinsically Lipschitz sections.  The main aims  are to  prove the  link between the intrinsic Hopf-Lax semigroup and  the intrinsic slope and to show that the intrinsic Hopf-Lax semigroup is a subsolution of Hamilton-Jacobi type equality. 
		\end{abstract}

\maketitle 
\tableofcontents

\section{Introduction} 	
Let $M$ be a compact Riemannian manifold, then the quadratic Hamilton-Jacobi equation on $M$ is
  \begin{equation*}
\frac{\partial F}{\partial t} + \frac{|\nabla F|^2}{2}=0.
\end{equation*}
Given an initial condition $f\in C(M, \R)$, it is well-known  the viscosity solution to the Hamilton-Jacobi equation is given by the Hopf-Lax formula
\begin{equation*}
F(y,t)= \inf_{z\in M} \left\{ f(z) + \frac{d^2(y,z)}{2t} \right\},
\end{equation*}
where $d$ is the geodesic distance on $M$ and $t\in \R^+$. The map that sends $f$ to $F(\cdot, t)$ defines a semigroup action of $\R^+$ on $C(M,\R)$, called the Hamilton-Jacobi semigroup or Hopf-Lax semigroup. Motivated by the fact that these semigroups are the key ingredients in \cite{AGS08.2, LV07}, we define a natural notion of the "intrinsic Hopf-Lax semigroup" (see Definition \ref{defiHopf-Lax semigroup}) in the intrinsic context introduced in \cite{DDLD21}. More precisely,  in general metric spaces, Le Donne and the author give a 'different' notion of Lipschitz graph starting from two simple facts:
\begin{enumerate}
\item we generalize the notion of intrinsically Lipschitz maps in subRiemannian Carnot groups \cite{ABB, BLU, CDPT}  introduced and studied by Franchi, Serapioni and Serra Cassano  \cite{FSSC, FSSC03, MR2032504}.
\item we consider graphs instead of maps.
\end{enumerate}

 In our context we consider a section $\phi :Y \to X$ of  $\pi:X \to Y$ (i.e., $\pi \circ \phi = id_Y$) such that $\pi$ produces a foliation for $X,$ i.e., $X= \coprod \pi ^{-1} (y)$ and the Lipschitz property of $\phi$ consists to ask that the distance between two points $\phi (y_1), \phi (y_2)$ is not comparable with the distance between $y_1$ and $y_2$ but between $\phi (y_1)$ and the fiber of $y_2$.  Following this idea, it is natural to define the intrinsic Hopf-Lax semigroup as follows. Let $X=\R^\kappa$, $Y\subset \R^\kappa$ be bounded and $\pi :X \to Y$ a quotient map. 
The intrinsic Hopf-Lax semigroup is the family of operators $(iQ_\cdot)_{t>0}$ defined as 
\begin{equation*}\label{equation231}
f \,\, \mapsto \,\, iQ_tf(y) := \inf_{z\in Y} \left\{ \max_{j=1,\dots , \kappa} f_j(z) +\frac 1 {2t} d^2(f(z), \pi^{-1} (y)) \right\}.
\end{equation*}
for any continuous section $f:Y \to X$ of $\pi.$ We also consider the case $\kappa >1$ because being intrinsically Lipschitz is equivalent to Lipschitz in the classical sense  when we consider the basic case $X=Y=\R.$

Yet, in the context of metric measure spaces, there are different notions of "energies" like Cheeger energy and Dirichlet form and a natural question is  to ask when a Dirichlet form is regular (i.e., when it coincides with the Cheeger energy). The first step of this study is to consider the Hopf-Lax semigroup and to prove its link with  the descending slope. The reader can see \cite{C99,  ACDM15, AES16, AGS08.1,  AGS08.2, AGS08.3, AGS08.4, BGL01, KSZ14, KZ12, FOT10, KM16, K04}.


 
Here we prove the following results:
\begin{enumerate}
\item Theorem \ref{theoremHopfLaxFormulanew}: we estimate of the time derivative of the Hopf-Lax semigroup in terms of $D^\pm f$ (see Definition  \ref{defiHopf-Lax semigroup}). 
\item Theorem \ref{theoremHopfLaxDUALITY}: we prove  the  link between the intrinsic Hopf-Lax semigroup and  the intrinsic slope.
\item Corollary \ref{provaHJno}: we show that the intrinsic Hopf-Lax semigroup is a subsolution of Hamilton-Jacobi type equality.
\end{enumerate}
The long-term objective is to obtain \textbf{the regularity of a suitable Dirichlet form in our intrinsic context.} This question arises from the fact that  the key point to obtain the regularity of a classical Dirichlet form turns out to be the existence of a "suitable" Lipschitz approximation for any function $f:Y \to \R$ inside the domain of the form. Yet, in our context, it is plausible that the appropriate approximation will be in terms of the intrinsically Lipschitz sections because they play the same role of classic Lipschitz notion in many results as proved in \cite{DDLD21, D22.31may, D22.1, D22.24june, D22.29}.

 {\bf Acknowledgements.}  We would like to thank Professor Giuseppe Savar\'e for the reference  \cite{AGS08.2} which is the core of this paper.

\section{Intrinsic   Lipschitz  sections}
\subsection{Intrinsic   Lipschitz  sections} The notion of intrinsically Lipschitz maps was introduced by Franchi, Serapioni and Serra Cassano  \cite{FSSC, FSSC03, MR2032504}  (see also  \cite{SC16, FS16}) in the context of subRiemannian Carnot groups after a negative result given by Ambrosio and  Kirchheim \cite{AmbrosioKirchheimRect}. Their aim is to establish a good definition of rectifiability in Carnot groups.

Here we present a generalization of this concept introduced in \cite{DDLD21}. Our setting is the following. We have a metric space $X$, a topological space $Y$, and a 
quotient map $\pi:X\to Y$, meaning
continuous, open, and surjective.
The standard example for us is when $X$ is a metric Lie group $G$ (meaning that the Lie group $G$ is equipped with a left-invariant distance that induces the manifold topology), for example a subRiemannian Carnot group, 
and $Y$ if the space of left cosets $G/H$, where 
$H<G$ is a  closed subgroup and $\pi:G\to G/H$ is the projection modulo $H$, $g\mapsto gH$.
\begin{defi}\label{def_ILS} 
We say that a map $\phi :Y \to X$ is a section of $\pi $ if 
\begin{equation}\label{equation1}
\pi \circ \phi =\mbox{id}_Y.
\end{equation}
Moreover, we say that a map $\phi:Y\to X$ is an {\em intrinsically Lipschitz section of $\pi$ with constant $L$},  with $L\in[1,\infty)$, if in addition
\begin{equation}\label{equationFINITA}
d(\phi (y_1), \phi (y_2)) \leq L d(\phi (y_1), \pi ^{-1} (y_2)), \quad \mbox{for all } y_1, y_2 \in Y.
\end{equation}
Here $d$ denotes the distance on $X$, and, as usual, for a subset $A\subset X$ and a point $x\in X$, we have
$d(x,A):=\inf\{d(x,a):a\in A\}$.
\end{defi}
A first observation is that this class is contained in the class of continuous maps (see \cite[Section 2.4]{DDLD21}) but  cannot be uniformly continuous (see Example 1.2 in \cite{D22.24june}). 
 In the case  $ \pi$ is a Lipschitz quotient or submetry \cite{MR1736929, Berestovski},  being intrinsically Lipschitz  is equivalent to biLipschitz embedding, see Proposition 2.4 in \cite{DDLD21}. Moreover, since $\phi$ is injective by \eqref{equation1}, the class of Lipschitz sections not include the constant maps.

   \begin{rem}\label{costuniversale}
   If $Y$ is bounded, we get that \begin{equation*}
K:= \sup _{y_1,y_2 \in Y}  d(\phi (y_1), \pi ^{-1} (y_2)) <+ \infty .
\end{equation*}
This follows because, on the contrary, if $K=+ \infty,$ then we get the contradiction $+ \infty= d(\phi (y_1), \pi ^{-1} (y_2))  \leq d(\phi (y_1), \phi (y_2)).$
  \end{rem}

\subsection{Intrinsic   Lipschitz  constants}
 We recall the definition of the intrinsic Lipschitz constants as in \cite{D22.31may, D22.29}, where we have adapted the theory of  \cite{C99, DM} in our intrinsic case.

\begin{defi}\label{def_ILS.1} Let $\phi:Y\to X$ be a section of $\pi$. Then we define 
\begin{equation*}
ILS (\phi):= \sup _{\substack{y_1, y_2 \in Y \\ y_1\ne y_2}} \frac{d(\phi (y_1), \phi (y_2))}{  d(\phi (y_1), \pi ^{-1} (y_2)) } \in [0, + \infty ]
\end{equation*}
and
\begin{equation*}
\begin{aligned}
ILS (Y,X,\pi ) &:= \{ \phi :Y \to X \,:\, \phi \mbox{ is an intrinsically Lipschitz section of $\pi$ and }  ILS(\phi) < +\infty \},\\
ILS_{b} (Y,X,\pi) & := \{ \phi \in  ILS (Y,X,\pi) \,:\, \mbox{spt}(\phi) \mbox{ is bounded} \}.\\
\end{aligned}
\end{equation*}
\textbf{For simplicity, we will write $ILS (Y,\R ^\kappa)$ instead of  $ILS (Y,\R ^\kappa ,\pi ).$}
\end{defi}

\begin{defi}\label{def_ILS.2} Let $\phi:Y\to X$ be a  section of $\pi$. Then we define the local intrinsically Lipschitz constant (also called intrinsic slope) of $\phi$ the map $Ils (\phi):Y \to [0,+\infty )$ defined as 
\begin{equation*}
Ils (\phi) (z):= \limsup _{y\to z} \frac{d(\phi (y), \phi (z))}{  d(\phi (y), \pi ^{-1} (z)) },
\end{equation*}
if $z \in Y$ is an accumulation point; and $Ils (\phi) (z):=0$ otherwise.
\end{defi}

\begin{defi}\label{def_ILS.3} Let $\phi:Y\to X$ be a section of $\pi$. Then we define the asymptotic  intrinsically Lipschitz constant of $\phi$ the map $Ils_a (\phi):Y \to [0,+\infty )$ given by
\begin{equation*}
Ils_a (\phi) (z):= \limsup _{y_1,y_2\to z}\frac{d(\phi (y_1),\phi (y_2))}{  d(\phi (y_1), \pi ^{-1} (y_2)) }
\end{equation*}
if $z \in Y$ is an accumulation point and $Ils (\phi) (z):=0$ otherwise.
\end{defi}
 
 \begin{rem}\label{defrem} Notice that by $\phi (y_2) \in \pi ^{-1} (y_2),$ it is trivial that $ d(\phi (y_1), \pi ^{-1} (y_2)) \leq d(\phi (y_1), \phi (y_2))$ and so $Ils(\phi) \geq 1.$ Moreover, it holds 
\begin{equation*}
 Ils (\phi ) \leq Ils_a (\phi ) \leq ILS(\phi).
\end{equation*}
\end{rem}

%

\section{The intrinsic Hopf-Lax semigroup}
\subsection{The intrinsic Hopf-Lax semigroup: Definition}
 In this section, we give a natural definition of the Hopf-Lax semigroup in our intrinsic context: the classical one is widely used in different situations, from metric measure spaces theory to optimal transportation.   We used the similar technique shown in \cite[Section 3]{AGS08.2} (see also \cite{LV07, P17}). 
 
 For any fixed $t\in \R^+$ we give the key definition of this note.
 \begin{defi}\label{defiHopf-Lax semigroup}  Let $X=\R^\kappa$, $Y\subset \R^\kappa$ be bounded and $\pi :X \to Y$ a quotient map. 
The intrinsic Hopf-Lax semigroup is the family of operators $(iQ_\cdot)_{t>0}$ defined as 
\begin{equation}\label{equation231}
f \,\, \mapsto \,\, iQ_tf(y) := \inf_{z\in Y}F(t,y,z),
\end{equation}
for any section $f=(f_1,\dots , f_\kappa ) \in C(Y, \R^\kappa)$ of $\pi,$ where
\begin{equation}\label{equation9luglio}
F(t,y,z)= \max_{j=1,\dots , \kappa} f_j(z) +\frac 1 {2t} d^2(f(z), \pi^{-1} (y)) .
\end{equation}
Given a continuous section $f:Y \to \R^\kappa $ of $\pi$, $iQ_tf(y)$ is then defined by the minimum problem \eqref{equation231}.
We also define:
\begin{equation}
\begin{aligned}
iD^+ f(y,t) & := \sup \left\{ \limsup_{n\to \infty}  d(f(y_n), \pi^{-1} (y)) \, :\, (y_n)_n \mbox{ is a minimizing sequence in } \eqref{equation231} \right\}\\
iD^- f(y,t) & := \inf \left\{ \liminf_{n\to \infty}  d(f(y_n), \pi^{-1} (y)) \, :\, (y_n)_n \mbox{ is a minimizing sequence in  }\eqref{equation231} \right\}\\
\end{aligned}
\end{equation}
\end{defi}

The map $(y,t) \mapsto iQ_tf(y)$, $Y\times (0,\infty) \to \R \cup \{ \pm \infty\}$ is obviously upper semicontinuous. The behavior of $iQ_tf$ is not trivial only in the set
\begin{equation*}
\{ y\in Y \,:\,  d(f(z), \pi^{-1} (y)) <\infty \mbox{ for some $z\in Y$ with } \max_j f_j(z)<\infty \},
\end{equation*}
and so we shall restrict our analysis in this set. In particular, it is sufficient to ask that $f$ is a bounded section and $Y$ is a bounded subset of $\R^\kappa$ (see Remark \ref{costuniversale}). Moreover, it this set, $iQ_tf (y)\in \R \cup \{ -\infty\}$ and so we also define
\begin{equation*}
t_*(y) := \sup \{ t\in \R^+\,:\, iQ_tf(y) >-\infty \},
\end{equation*}
 with the convention $t_*(y)=0$ if $iQ_tf(y) =-\infty $ for all $t>0.$

 \begin{rem}
Notice that by Remark \ref{costuniversale}, if $Y$ is bounded, then $iD^+ f <+\infty .$ Moreover, in general we have that
\begin{equation*}
iD^+ f (y,t) \geq iD^- f(y,t), \quad \forall y\in Y, t\in \R^+.
\end{equation*} 
\end{rem}

 \begin{prop}[Semicontinuity of $iD^\pm$]\label{propSEMICONT} Let $y_n\to y$ and $t_n \to t \in (0,t_*(y)).$ Then,
 \begin{equation*}
\begin{aligned}
iD^- f(y,t) &\leq \liminf_{n\to \infty}  iD^- f(y_n,t_n), \\
iD^+ f(y,t) &\geq \limsup_{n\to \infty}  iD^+ f(y_n,t_n). \\
\end{aligned}
\end{equation*}
In particular, for every $y\in Y$ the map $t\mapsto iD^-f(y,t)$ is left continuous in $(0,t_*(y))$ and the map  $t\mapsto iD^+f(y,t)$ is right continuous in $(0, t_*(y)).$
\end{prop}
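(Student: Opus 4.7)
The plan is to derive both inequalities from a single diagonal extraction, and then combine the resulting two-sided semicontinuities with the monotonicity of $t \mapsto iD^\pm f(y,t)$ to obtain one-sided continuity. As a preparation, since $Y$ is bounded and we restrict to the set where $F$ is finite (so $f$ is effectively bounded, cf.\ Remark~\ref{costuniversale}) and $\pi$ is a quotient map on subsets of $\R^\kappa$, I would first verify that $y \mapsto d(a, \pi^{-1}(y))$ is continuous uniformly for $a$ in bounded subsets of $\R^\kappa$. This yields $F(t_n, y_n, z) - F(t, y, z) \to 0$ uniformly in $z$ on any set where $f(z)$ stays bounded, since in addition $t_n \to t > 0$ makes $1/(2t_n) \to 1/(2t)$ uniformly.

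For the $iD^-$ inequality, set $\alpha := \liminf_n iD^- f(y_n, t_n)$, pass to a subsequence where this $\liminf$ is attained, and for each $n$ select a minimizing sequence $(z_k^n)_k$ for $iQ_{t_n} f(y_n)$ with $\liminf_k d(f(z_k^n), \pi^{-1}(y_n)) \leq iD^- f(y_n, t_n) + 1/n$. A diagonal choice of $k_n$ produces $w_n := z_{k_n}^n$ satisfying both
\[
F(t_n, y_n, w_n) \leq iQ_{t_n} f(y_n) + \tfrac{1}{n}, \qquad d(f(w_n), \pi^{-1}(y_n)) \leq iD^- f(y_n, t_n) + \tfrac{2}{n}.
\]
The preparatory step, together with upper semicontinuity of $(y,t) \mapsto iQ_t f(y)$ noted after Definition~\ref{defiHopf-Lax semigroup}, gives $\limsup_n F(t, y, w_n) \leq iQ_t f(y)$, so $(w_n)$ is a minimizing sequence for $iQ_t f(y)$. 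Hence, by the defining infimum,
\[
iD^- f(y,t) \leq \liminf_n d(f(w_n), \pi^{-1}(y)) = \liminf_n d(f(w_n), \pi^{-1}(y_n)) \leq \alpha.
\]
The $iD^+$ inequality is the mirror argument with $\inf, \liminf, \leq$ replaced by $\sup, \limsup, \geq$.

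For the final clause, fix $y$ and apply the two-sided inequalities with $y_n \equiv y$: this gives $t \mapsto iD^- f(y,t)$ lower semicontinuous and $t \mapsto iD^+ f(y,t)$ upper semicontinuous on $(0, t_*(y))$. One-sided continuity then follows from the standard Hopf-Lax monotonicity that $t \mapsto iD^\pm f(y, t)$ is nondecreasing: writing the minimality inequalities $F(t_i, y, z_i) \leq F(t_i, y, z_j)$ for $i,j \in \{1,2\}$ along minimizers and summing produces $d(f(z_1), \pi^{-1}(y)) \leq d(f(z_2), \pi^{-1}(y))$ whenever $t_1 \leq t_2$, and the analogous bookkeeping for minimizing sequences transfers this monotonicity to $iD^\pm$. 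The main technical obstacle is the uniformity in $z$ in the preparatory step: if $\pi$ were only a topological quotient map, a Hausdorff-type continuity of its fibres on bounded sets would not be automatic, so the argument really leans on the Euclidean setting $X = \R^\kappa$, $Y \subset \R^\kappa$ bounded, plus the effective boundedness of $f$ on the region of finiteness of $F$.
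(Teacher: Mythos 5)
Your argument follows essentially the same route as the paper's: a diagonal extraction of points that are simultaneously near-optimal for the perturbed problems $F(t_n,y_n,\cdot)$ and near-attaining for $iD^\mp f(y_n,t_n)$, upper semicontinuity of $(y,t)\mapsto iQ_tf(y)$ to show the diagonal sequence is minimizing for $F(t,y,\cdot)$, and then the Hopf--Lax monotonicity of $iD^\pm f(y,\cdot)$ to convert the resulting two-sided semicontinuity into the stated one-sided continuity. Your explicit flag that the replacement of $d(f(w_n),\pi^{-1}(y_n))$ by $d(f(w_n),\pi^{-1}(y))$ needs a Hausdorff-type continuity of the fibres on bounded sets is well taken: the paper asserts this at the ``infinitesimal'' step as a consequence of mere boundedness, whereas it is genuinely an extra regularity assumption on $\pi$ that both proofs tacitly rely on.
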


  \begin{proof} 
  We adapt the proof as in \cite[Proposition 3.2]{AGS08.2}. For every $n\in \N,$ let $(y_n^\ell)_{\ell \in \N}$ be a minimizing sequence for $F(t_n, y_n,\cdot )$ for which the limit of $d(f(y_n^\ell), \pi^{-1} (y_n)) $ as $\ell \to \infty $ equals $iD^- (y_n,t_n).$ By Remark \ref{costuniversale}, $\sup _{\ell, n}  d(f(y_n^\ell), \pi^{-1} (y_n)) <+\infty $ and for any $n$ we have that
  \begin{equation*}
\lim_{\ell \to \infty} f(y^\ell_n) +\frac 1 {2t_n}   d^2(f(y_n^\ell), \pi^{-1} (y_n)) = iQ_ {t_n} f(y_n).
\end{equation*}
Moreover, the upper semicontinuity of $(y,t) \mapsto iQ_tf(y)$ gives that $\limsup _n iQ_{t_n} f(x_n) \leq iQ_tf(y).$ Since $ d(f(y_n^\ell), \pi^{-1} (y_n)) $ is bounded, it follows that $$\sup_\ell | d^2(f(y_n^\ell), \pi^{-1} (y_n))  -  d^2(f(y_n^\ell), \pi^{-1} (y)) | $$ is infinitesimal and so by a diagonal argument we can find a sequence $n\mapsto \ell (n) $ such that
 \begin{equation*}
\begin{aligned}
& \limsup_{ n\to \infty} f(y^{\ell(n)})  + \frac 1 {2t}  d^2(f(y_n^\ell), \pi^{-1} (y)) \leq iQ_tf(y),\\
& | d(f(y_n^{\ell(n)}), \pi^{-1} (y_n))  -  iD^-(y_n, t_n) |  \leq \frac 1 n .\\
\end{aligned}
\end{equation*}
This implies that $y\mapsto y^{\ell (n)}_n $ is a minimizing sequence for $F(t,y,\cdot ),$ therefore 
\begin{equation*}
D^- f(y,t) \leq \liminf_{ n\to \infty} d(f(y_n^{\ell(n)}), \pi^{-1} (y)) = \liminf_{ n\to \infty} d(f(y_n^{\ell(n)}), \pi^{-1} (y_n))  = \liminf_{ n\to \infty} D^-f(y_n, t_n).
\end{equation*}
Notice that in the equality we used that fact that $y_n \to y$ which we will prove in Proposition \ref{theoremHopfLax} (ii).  
In a similar way, if we choose instead sequence $(y^\ell_n)_\ell$ on which the supremum in the definition of $iD^+f(y_n, t_n)$ is attained, we obtain the upper semicontinuity property of $iD^+f$.

  \end{proof}

  We conclude this section with an easy result when $f$ is an intrinsically Lipschitz section.
  
 \begin{prop}\label{theoremHopfLax.2}
Let $f \in ILS(Y, \R ^\kappa)$ and let $L \geq 1$ be its Lipschitz constant. Then, 
$$2t L \geq iD^+f(y,t) \geq iD^-f(y,t),$$ for every $y\in Y$ and $t\in \R^+.$

\end{prop}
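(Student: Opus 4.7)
The plan is to compare the value of $F(t,y,\cdot)$ along any minimizing sequence to its value at the point $z=y$ itself, exploiting the fact that $f(y) \in \pi^{-1}(y)$ because $f$ is a section.

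First I would dispose of the easier inequality $iD^+f(y,t) \geq iD^-f(y,t)$. This is immediate: pick any minimizing sequence $(y_n)_n$ for $F(t,y,\cdot)$ (one exists by definition of infimum), then $\limsup_n d(f(y_n),\pi^{-1}(y)) \geq \liminf_n d(f(y_n),\pi^{-1}(y))$, and taking sup on the left and inf on the right over all minimizing sequences yields $iD^+f(y,t) \geq iD^-f(y,t)$.

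For the main bound, I would first observe that plugging $z=y$ into \eqref{equation9luglio} gives $F(t,y,y) = \max_j f_j(y)$ since $d(f(y),\pi^{-1}(y))=0$; hence $iQ_tf(y) \leq \max_j f_j(y)$. Now let $(y_n)_n$ be any minimizing sequence and set $a_n := d(f(y_n),\pi^{-1}(y))$. By the minimizing property,
\begin{equation*}
\max_j f_j(y_n) + \frac{1}{2t} a_n^2 = iQ_tf(y) + \eps_n \leq \max_j f_j(y) + \eps_n,
\end{equation*}
with $\eps_n \to 0^+$. Rearranging,
\begin{equation*}
\frac{1}{2t} a_n^2 \leq \max_j f_j(y) - \max_j f_j(y_n) + \eps_n.
\end{equation*}
The next step is to control the right-hand side by $a_n$ via the intrinsic Lipschitz condition. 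Picking $j^*$ realising $\max_j f_j(y)$, one has $\max_j f_j(y) - \max_j f_j(y_n) \leq f_{j^*}(y) - f_{j^*}(y_n) \leq \|f(y)-f(y_n)\|_\infty \leq d(f(y),f(y_n))$, and then \eqref{equationFINITA} applied with $y_1=y_n$, $y_2=y$ yields $d(f(y),f(y_n)) \leq L\, d(f(y_n),\pi^{-1}(y)) = L a_n$.

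Combining gives the quadratic inequality $a_n^2 - 2tL\, a_n - 2t\eps_n \leq 0$, whence $a_n \leq tL + \sqrt{t^2L^2 + 2t\eps_n}$. Taking the $\limsup$ as $n\to\infty$ and using $\eps_n \to 0$ yields $\limsup_n a_n \leq 2tL$. Since $(y_n)$ was an arbitrary minimizing sequence, we conclude $iD^+f(y,t) \leq 2tL$, completing the proof. No serious obstacle is anticipated; the only mildly delicate point is the quadratic step, which is handled cleanly once the Lipschitz inequality is in place.
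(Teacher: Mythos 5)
Your proposal is correct and follows essentially the same route as the paper: compare $F(t,y,\cdot)$ along a minimizing sequence with its value at $z=y$, bound $\max_j f_j(y)-\max_j f_j(y_n)$ by $d(f(y),f(y_n))$, and then apply the intrinsic Lipschitz inequality \eqref{equationFINITA} with $y_1=y_n$, $y_2=y$. The one (minor) difference is in the bookkeeping of the error term: the paper first disposes of the case $iQ_tf(y)=\max_j f_j(y)$ and then drops $\eps_n$ ``definitively,'' whereas you keep $\eps_n$ and solve the quadratic $a_n^2-2tL\,a_n-2t\eps_n\leq 0$ before passing to the limit; your version is slightly cleaner since it handles both cases at once, while the paper's shortcut for the case $iQ_tf(y)=\max_j f_j(y)$ (claiming this forces $iD^+f(y,t)=0$) is stated without justification and is not needed.
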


\begin{proof}
We can suppose $iQ_t f(y)<\max_{j=1,\dots , \kappa} f_j(y);$ indeed, if not, it must be $$iQ_tf(y) =\max_{j=1,\dots , \kappa} f_j(y) \quad  \Rightarrow \quad iD^+ f(y,t) =0.$$ Hence we can take a minimizing sequence $(y_n)_n$ for $iQ_tf(y)$ so that definitively 
\begin{equation*}
\max_{j=1,\dots , \kappa} f_j(y_n) +\frac 1 {2t} d^2(f(y_n), \pi^{-1} (y)) \leq \max_{j=1,\dots , \kappa} f_j(y).
\end{equation*}
Using the fact that $f$ in an intrinsically $L$-Lipschitz sections, it follows that
 \begin{equation*}
d^2(f(y_n), \pi^{-1} (y))  \leq 2t d(f(y), f(y_n)) \leq 2t L d(f(y_n), \pi^{-1} (y)).
\end{equation*}
Dividing for $d(f(y_n), \pi^{-1} (y))$ and taking the limsup in $n$, we find the thesis.
 \end{proof}

\subsection{The intrinsic Hopf-Lax semigroup: Properties}
The next theorem treats the main properties of the intrinsic Hopf-Lax semigroup. Like in the classical case, note that no completeness is needed and there is no reference measure. In particular, the point $(iv)$ of the next result shows that $iD^\pm f(y,\cdot )$ are non increasing and they coincide out of a countable set (see Remark \ref{remnondecreasing}).

 \begin{prop}\label{theoremHopfLax}
Let $f \in C_b(Y,\R ^\kappa)$ be a section of  $\pi :\R^\kappa \to Y$. Then we have the following basic properties for $iQf(y):$
\begin{description}
\item[ i]  $\inf _{z\in Y} \min_{j=1,\dots , \kappa} f_j \leq iQ_tf \leq \max_{j=1,\dots , \kappa} f_j \leq \sup _{z\in Y} \max_{j=1,\dots , \kappa} f_j< + \infty.$
\item[ ii]  $iQ_tf \to \max_{j=1,\dots , \kappa} f_j$ pointwise as $t\to 0.$
\item[ iii] for any fixed $y\in Y,$ the map $t\mapsto iQ_tf(y)$ is Lipschitz in the classical sense in $(\delta , t_*(y))$ for all $\delta $ such that $0<\delta <t_*(y)$ and the Lipschitz constant depends on $\delta$ and $Osc(f):= \sup \max_{j=1,\dots , \kappa} f_j -\inf \max_{j=1,\dots , \kappa} f_j .$
\item[ iv]  for every $y\in Y$ and $0<t<s < t_*(y),$ it holds: $iD^+ f (y,t) \leq iD^- f(y,s).$
\end{description}

\end{prop}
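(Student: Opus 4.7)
The plan is to handle the four items in order, each adapting a classical Hopf-Lax argument to our intrinsic setting.

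For (i), I would get the upper bound by plugging $z=y$ into the infimum: since $f$ is a section, $f(y) \in \pi^{-1}(y)$, so $d(f(y), \pi^{-1}(y)) = 0$ and $F(t,y,y) = \max_j f_j(y)$. The lower bound follows from $F(t,y,z) \geq \max_j f_j(z) \geq \min_j f_j(z)$, and the final finiteness is exactly the assumption $f \in C_b(Y,\R^\kappa)$.

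For (ii), the bound $\limsup_{t\to 0^+} iQ_t f(y) \leq \max_j f_j(y)$ is already in (i). For the other direction, I would argue by contradiction: if some $t_n \downarrow 0$ and near-minimizers $z_n$ yielded $F(t_n,y,z_n) \leq \max_j f_j(y) - \varepsilon$, then $\tfrac{1}{2t_n} d^2(f(z_n), \pi^{-1}(y)) \leq \mathrm{Osc}(f)$, hence $d(f(z_n),\pi^{-1}(y)) \to 0$. Taking $w_n \in \pi^{-1}(y)$ with $|f(z_n)-w_n| \to 0$, boundedness of $f$ extracts a subsequence with $w_n \to w_\infty$, so $f(z_n) \to w_\infty$; continuity of $\pi$ and the section identity $z_n = \pi(f(z_n))$ force $z_n \to \pi(w_\infty) = y$. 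Continuity of $f$ then yields $\max_j f_j(z_n) \to \max_j f_j(y)$, contradicting the strict gap.

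For (iii), monotonicity $iQ_t f(y) \geq iQ_s f(y)$ for $t < s$ is immediate since $F(t,y,z) \geq F(s,y,z)$ pointwise in $z$. For the Lipschitz estimate on $(\delta, t_*(y))$, I would take an $\eta$-near-minimizer $z$ for $iQ_s f(y)$; then (i) bounds $\tfrac{1}{2s} d^2(f(z),\pi^{-1}(y)) \leq \mathrm{Osc}(f) + \eta$, so
\begin{equation*}
iQ_t f(y) \leq F(t,y,z) = F(s,y,z) + \frac{s-t}{2st} d^2(f(z),\pi^{-1}(y)) \leq iQ_s f(y) + \eta + \frac{s-t}{t}\bigl(\mathrm{Osc}(f) + \eta\bigr).
\end{equation*}
Letting $\eta \to 0$ and combining with the monotonicity already observed gives $|iQ_t f(y) - iQ_s f(y)| \leq \tfrac{|s-t|}{\delta}\,\mathrm{Osc}(f)$.

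For (iv), the standard two-minimizing-sequences trick applies. Fix $0 < t < s < t_*(y)$ and choose $(y_n^t)_n$ a minimizing sequence for $F(t,y,\cdot)$ with $\lim_n d(f(y_n^t),\pi^{-1}(y))$ approximating $iD^+ f(y,t)$ from below, and $(y_m^s)_m$ a minimizing sequence for $F(s,y,\cdot)$ with $\lim_m d(f(y_m^s),\pi^{-1}(y))$ approximating $iD^- f(y,s)$ from above. Writing the minimality errors $\alpha_n = F(t,y,y_n^t) - iQ_t f(y) \to 0$ and $\beta_m = F(s,y,y_m^s) - iQ_s f(y) \to 0$, using $y_m^s$ as competitor at time $t$ and $y_n^t$ as competitor at time $s$, then summing the two inequalities cancels the $\max_j f_j$ terms and produces
\begin{equation*}
\left(\frac{1}{2t} - \frac{1}{2s}\right)\bigl[\, d^2(f(y_n^t),\pi^{-1}(y)) - d^2(f(y_m^s),\pi^{-1}(y)) \,\bigr] \leq \alpha_n + \beta_m.
\end{equation*}
Since $\tfrac{1}{2t} - \tfrac{1}{2s} > 0$, taking $\limsup_n$ then $\liminf_m$ yields $(iD^+ f(y,t))^2 \leq (iD^- f(y,s))^2$, and the claim follows.

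The main obstacle is (ii): passing from $d(f(z_n),\pi^{-1}(y)) \to 0$ to $z_n \to y$ requires combining boundedness of $f$, continuity of $\pi$, and the section identity in a way that has no direct analogue in the classical Hopf-Lax proof. The other three items are essentially routine adaptations of \cite{AGS08.2} once one notes that the section property $f(y) \in \pi^{-1}(y)$ here plays the role classically played by $d(y,y) = 0$.
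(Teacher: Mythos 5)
Your proposal is correct and follows essentially the same route as the paper: $z=y$ as competitor for (i), the quasi-minimizer argument with $d(f(z_n),\pi^{-1}(y))\to 0$ combined with $z_n=\pi(f(z_n))$ for (ii), the restriction to the sublevel set $\frac{1}{2s}d^2(f(z),\pi^{-1}(y))\leq \mathrm{Osc}(f)$ for (iii), and the two-competitor summation for (iv). The only differences are cosmetic and in your favour: you phrase (ii) as a contradiction rather than directly, and in (iv) you carry out the minimizing-sequence version with explicit error terms $\alpha_n,\beta_m$, which the paper only sketches by assuming the infima are attained.
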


  \begin{proof} We adapt the proof as in \cite[Theorem 2.3.3]{P17}.
  
$(i).$ It is a trivial consequence of the fact that $\frac 1 {2t} d^2(f(z), \pi^{-1} (z)) \geq 0$ and that we can use $z=y$ as a competitor in the infimum of  \eqref{equation231}.

$(ii).$ Fix $y\in Y$ and take a sequence $t_n \to 0$; consider a quasi-minimizing sequence $(y_n)_n$ for $iQ_{t_n}f(y)$, in the sense that:
\begin{equation*}
iQ_{t_n} f(y) +\frac 1 n \geq \max_{j=1,\dots , \kappa} f_j(y_n) + \frac 1 {2t_n} d^2(f(y_n) , \pi^{-1} (y)), \quad \forall n\in \N .
\end{equation*}
Firstly, note that the uniform bound given by $(i)$ and $f\in C_b(Y, \R^\kappa)$ yields
\begin{equation}\label{equation24giugno}
f(y_n) \to f(y).
\end{equation}
Indeed,
\begin{equation*}
d^2 (f(y_n) , \pi^{-1} (y)) \leq 2 t_n \left(iQ_{t_n} f(y) +\frac 1 n - \max_{j=1,\dots , \kappa} f_j(y_n) \right) \leq 2t_n \left(2\|f \|_\infty +\frac 1 n \right) \,\, \longrightarrow 0,
\end{equation*}
and so 
\begin{equation*}
f(y_n) \to a \in \pi^{-1} (y).
\end{equation*}
Moreover, notice that $\pi(f(y_n))=y_n$ and using the fact that $f$ is a section, we get $y_n \to \pi (a)=y.$ Consequently, \eqref{equation24giugno} holds by continuity of $f$. Now, since the simply fact
\begin{equation*}
iQ_{t_n} f(y) +\frac 1 n \geq  f_\ell(y_n) + \frac 1 {2t_n} d^2(f(y_n) , \pi^{-1} (y)), \quad \forall n\in \N ,
\end{equation*} for every $\ell =1,\dots , \kappa ,$ if $\max_{j=1,\dots , \kappa} f_j(y)=f_o(y),$ then
\begin{equation*}
f_o(y) \geq \lim_{n\to +\infty } iQ_{t_n} f(y) \geq \lim_{n\to +\infty} f_o(y_n) = f_o(y),
\end{equation*}
and so we get the second point $(ii).$

$(iii).$ Fix $y\in Y, \varepsilon >0$ and consider $s,t \in\R$ with $s>t.$ Let's take an $\varepsilon $-quasi minimum $y_s$ for $iQ_sf(y),$ i.e.,
\begin{equation*}
iQ_{s} f(y) +\varepsilon \geq \max_{j=1,\dots , \kappa} f_j(y_s) + \frac 1 {2s} d^2(f(y_s) , \pi^{-1} (y)).
\end{equation*}
 Now using $y_s$ as a competitor for the inf-problem of $iQ_tf(y)$ we get
 \begin{equation}\label{equation236}
\begin{aligned}
& |iQ_t f(y)-iQ_sf(y)| -\varepsilon  = iQ_t f(y)-iQ_sf(y) -\varepsilon \\
&\,\, \leq \max_{j=1,\dots , \kappa} f_j(y_s) + \frac 1 {2t} d^2(f(y_s) , \pi^{-1} (y)) - \max_{j=1,\dots , \kappa} f_j(y_s) - \frac 1 {2s} d^2(f(y_s) , \pi^{-1} (y)) -\varepsilon \\
&\,\, \leq \frac{|s-t|}{2ts} d^2(f(y_s), \pi^{-1} (y)) -\varepsilon \\
\end{aligned}
\end{equation}
We need to have some control on the distance along the quasi minimizing sequence $(y_s)_s$; to get it, note that we can confine our "attention" in the inf-problem of $i Q_sf(y)$ to a subset of $Y$ (dependent on $s,y$); precisely we can suppose to work with $z$ inside the set
\begin{equation}\label{equation237}
\left\{ z\in Y \, : \, \frac 1 {2s} d^2 (f(z), \pi^{-1} (y)) \leq \sup \max_{j=1,\dots , \kappa} f_j -\inf \max_{j=1,\dots , \kappa} f_j =: Osc (f)\right\}.
\end{equation}
Indeed, if we take $z\in Y$ such that does not satisfy the inequality in \eqref{equation237}, we deduce that
\begin{equation*} \begin{aligned}
\max_{j=1,\dots , \kappa} f_j(z) +\frac 1 {2s} d^2(f(z), \pi^{-1} (y)) & > \inf \max_{j=1,\dots , \kappa} f_j +\sup \max_{j=1,\dots , \kappa} f_j - \inf \max_{j=1,\dots , \kappa} f_j \\
& =\sup \max_{j=1,\dots , \kappa} f_j \geq \max_{j=1,\dots , \kappa} f_j(z) \geq iQ_s f(y).
\end{aligned}
\end{equation*}
Hence, without loss of generality, we can suppose:
\begin{equation*}
y_s \in B(y, \sqrt{2s\, Osc (f)}) \subset Y.
\end{equation*}
Thanks to \eqref{equation236}, it holds
  \begin{equation*}
\begin{aligned}
|iQ_t f(y)-iQ_sf(y)| & \leq \frac{|s-t|}{2ts} d^2(f(y_s), \pi^{-1} (y)) + \varepsilon \\
& \leq \frac{|s-t|}{t} Osc (f) + \varepsilon  \\
&\leq \frac{|s-t|}{\delta } Osc (f) + \varepsilon , 
\end{aligned}
\end{equation*}
for all $\varepsilon >0$ and recall that $t\in (\delta , +\infty).$ By the arbitrariness of $\varepsilon,$ this gives us the sought uniformly Lipschitzianity with respect to $t,$ as desired.

$(iv).$  Fix $0<t<s$ and $y\in Y.$ Let's make this proof under the additional condition that the infimum in $iQ_tf(y)$ and in $iQ_sf(y)$ are both attained and so they are minima (if not one should arrange a bit the proof but it is mainly the same idea). Hence take $y_t, y_s$ minima related respectively to $iQ_tf(y)$ and $iQ_sf(y)$  and, by definition of the intrinsic Hopf Lax semigroup, we deduce that
  \begin{equation*}
\begin{aligned}
\max_{j=1,\dots , \kappa} f_j(y_t) +\frac 1 {2t} d^2(f(y_t), \pi^{-1} (y)) & \leq  \max_{j=1,\dots , \kappa} f_j(y_s) +\frac 1 {2t} d^2(f(y_s), \pi^{-1} (y)),\\
  \max_{j=1,\dots , \kappa} f_j (y_s) +\frac 1 {2s} d^2(f(y_s), \pi^{-1} (y)) & \leq  \max_{j=1,\dots , \kappa} f_j(y_t) +\frac 1 {2s} d^2(f(y_t), \pi^{-1} (y)).\\
\end{aligned}
\end{equation*}
Summing up the previous equations, it holds
  \begin{equation*}
\begin{aligned}
\left(\frac 1 {2t} - \frac 1 {2s} \right) d^2(f(y_t), \pi^{-1} (y)) & \leq \left(\frac 1 {2t} - \frac 1 {2s} \right)  d^2(f(y_s), \pi^{-1} (y)),\\
\end{aligned}
\end{equation*}
and so, recall that $s>t$ and $1/s < 1/t,$ we obtain
\begin{equation*}
d^2(f(y_t), \pi^{-1} (y)) \leq d^2(f(y_s), \pi^{-1} (y)).
\end{equation*}
Now let the square root in the last inequality, $d(f(y_t), \pi^{-1} (y)) \leq d(f(y_s), \pi^{-1} (y))$ holds. More precisely, $$d(f(y_t), \pi^{-1} (y)) \leq d(f(y_s), \pi^{-1} (y))$$ is true for every choice $(y_s, y_t)$ into the class of minimizers of $iQ_sf(y)$ and $iQ_tf(y),$ respectively. This gives us the sought inequality and the proof of the statement is complete.
 \end{proof}

   \begin{rem}
We notice that in Theorem $\ref{theoremHopfLax}$ (iii) we do not expect the Lipschitzianity in the second variable. We do not know if $iQ_tf$ is intrinsic Lipschitz in the second variable but this proof does not work because in our context we consider the distance between a set and a point instead of between two points.  

\end{rem}
 
   \begin{rem}\label{remnondecreasing}
   It holds
   \begin{equation}\label{equationKEY25}
 iD^+f(y,t) = iD^-f(y,t), \quad \mbox{for a.e. } t\in \R^+.
\end{equation}
Indeed,
\begin{equation*}
\sup_{s<t} iD^+f(y,s) \leq iD^-f(y,t) \leq  \inf_{s>t} iD^+f(y,s),
\end{equation*}
for every $y\in Y$ and $t\in \R^+.$ Notice that in the first inequality we used Theorem \ref{theoremHopfLax} (v). Moreover, given $y\in Y,$ we have that
\begin{equation*}
\sup_{s<t} iD^+f(y,s) = iD^+f(y,t) = \inf_{s>t} iD^+f(y,s),
\end{equation*}
for any $t\in \R^+,$ where $iD^+f(y, \cdot)$ is continuous; nevertheless, $iD^+f(y, \cdot)$ is non decreasing and so there are at most countable many points of discontinuity. Consequently, \eqref{equationKEY25} is true. 

   \end{rem}
 As corollaries of Theorem $\ref{theoremHopfLax}$ we get the following results.
   \begin{coroll}\label{coroll1}
Under the assumption of Theorem $\ref{theoremHopfLax}$,  for every fixed $y\in Y$ the map
\begin{equation*}
t\,\, \mapsto \,\, iQ_t f(y)
\end{equation*}
turns out to be differentiable almost everywhere with respect to the $L^1$-measure in $\R^+$.
\end{coroll}

 \begin{proof}
It is enough to consider Theorem $\ref{theoremHopfLax}$ (iii) and Rademacher's Theorem.
 \end{proof}


 \subsection{The time derivative of $iQ_tf$}
We find a precise estimate of the time derivative of the Hopf-Lax semigroup in terms of $D^\pm f(y,t);$ in order to do this fact, we give an alternative proof of Lipschitz property of $iQ_tf.$ Moreover, we recall that semiconcave map $g$ on an open interval are local quadratic perturbations of concave maps; they inherit from concave functions all pointwise differentiability properties, as existence of right and left derivatives $\frac{d^-}{ dt} g \geq \frac{d^+}{ dt} g$ which is important for the next result.
  \begin{prop}[Time derivative of $iQ_tf$]\label{theoremHopfLaxFormulanew}
The map $(0,t_*(y)) \ni t \mapsto iQ_t f(y)$ is locally Lipschitz and locally semiconcave. For all $t\in (0,t_*(y))$ it satisfies  
\begin{equation}\label{equationIMPO}
\begin{aligned}
\frac{d^-}{ dt} iQ_t f(y) & =- \frac{ (iD^-f(y,t) )^2}{ 2t^2},\\
\frac{d^+}{ dt} iQ_t f(y) & =- \frac{ (iD^+f(y,t) )^2}{ 2t^2},\\
\end{aligned}
\end{equation} In particular, $t\mapsto iQ_tf(y)$ is differentiable at $t\in (0, t_*(y))$ if and only if $iD^+f(y,t)= iD^-f(y,t).$
\end{prop}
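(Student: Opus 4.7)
My plan is to treat $\phi(t) := iQ_t f(y) = \inf_{z \in Y} F(t,y,z)$ as an envelope of the family $t \mapsto F(t,y,z)$ defined in \eqref{equation9luglio}, and to prove all three assertions by direct comparison. The local Lipschitzianity is already proved in Proposition \ref{theoremHopfLax}(iii), so only the semiconcavity and the two one-sided derivative formulas require new arguments.

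For semiconcavity, fix a compact subinterval $[\delta, T] \subset (0, t_*(y))$. A direct computation gives $\partial_t^2 F(t,y,z) = d^2(f(z), \pi^{-1}(y))/t^3$, which by Remark \ref{costuniversale} is bounded uniformly in $z$ and in $t \in [\delta, T]$ by $M := K^2/\delta^3$. Hence $t \mapsto F(t,y,z) - \frac{M}{2} t^2$ is concave on $[\delta, T]$ for every fixed $z$, and since the pointwise infimum of a family of concave functions is concave, the identity $\phi(t) - \frac{M}{2} t^2 = \inf_z \bigl(F(t,y,z) - \frac{M}{2} t^2\bigr)$ makes $\phi - \frac{M}{2} t^2$ concave on $[\delta, T]$. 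This is precisely local semiconcavity, and it guarantees the existence of the one-sided derivatives $\frac{d^\pm}{dt}\phi(t)$ on $(0, t_*(y))$ with $\frac{d^-}{dt}\phi \geq \frac{d^+}{dt}\phi$.

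For the one-sided derivative formulas I would use the algebraic identity $F(s,y,z) - F(t,y,z) = d^2(f(z), \pi^{-1}(y))(t-s)/(2st)$. To prove the upper bound $\frac{d^+}{dt}\phi(t) \leq -(iD^+ f(y,t))^2/(2t^2)$, plug a minimizing sequence $(z_n)$ at time $t$ with $d(f(z_n), \pi^{-1}(y)) \to iD^+ f(y,t)$ into the trivial inequality $iQ_s f(y) \leq F(s,y,z_n)$, send $n \to \infty$, and then let $s \to t^+$. For the matching lower bound, for each $s > t$ pick $z_s \in Y$ with $F(s,y,z_s) - iQ_s f(y) \leq \eps_s$ and $d(f(z_s), \pi^{-1}(y)) \leq iD^+ f(y,s) + \eta_s$ (possible by the very definition of $iD^+$ as a supremum over minimizing sequences), insert into $iQ_t f(y) \leq F(t,y,z_s)$, and pass to the limit $s \to t^+$ using the right-continuity of $iD^+ f(y,\cdot)$ supplied by Proposition \ref{propSEMICONT}. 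The left derivative is handled symmetrically, now using a minimizing sequence at $t$ realizing $iD^- f(y,t)$ in one direction and, in the other, the monotonicity $iD^+ f(y,s) \leq iD^- f(y,t)$ for $s < t$ from Proposition \ref{theoremHopfLax}(iv) together with left-continuity of $iD^- f(y,\cdot)$. The differentiability characterization is then immediate from \eqref{equationIMPO} because both $iD^\pm$ are non-negative.

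The main obstacle I anticipate is the careful choice of the tolerances $\eps_s$ and $\eta_s$ in the lower-bound arguments, because genuine minima of $F(s,y,\cdot)$ need not exist. These tolerances must be chosen to vanish strictly faster than $|s-t|$ (for instance $\eps_s = |s-t|^2$ and $\eta_s = |s-t|$), so that the parasitic error $\eps_s/|s-t|$ in the difference quotient tends to zero as $s \to t^{\pm}$ and does not pollute the limit. Apart from this bookkeeping, the argument is parallel to the classical Hopf-Lax envelope computation of \cite[Theorem 3.3]{AGS08.2}.
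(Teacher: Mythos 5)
Your proof is correct, and for the heart of the proposition -- the two one-sided derivative formulas -- it follows essentially the same route as the paper: both arguments exploit the identity $F(s,y,z)-F(t,y,z)=\tfrac{1}{2}d^2(f(z),\pi^{-1}(y))\left(\tfrac1s-\tfrac1t\right)$ to sandwich the difference quotient between $-(iD^-f(y,s))^2/(2st)$ and $-(iD^+f(y,t))^2/(2st)$, and then pass to the limit using the one-sided continuity of $iD^\pm$ from Proposition \ref{propSEMICONT} together with the monotonicity from Proposition \ref{theoremHopfLax}(iv); your explicit tolerances $\eps_s=|s-t|^2$, $\eta_s=|s-t|$ just make precise what the paper does with minimizing sequences. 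Where you genuinely diverge is the semiconcavity: the paper deduces it \emph{a posteriori} from \eqref{equationIMPO}, arguing that the distributional derivative $t\mapsto -(iD^+f(y,t))^2/(2t^2)$ is locally bounded below, whereas you prove it directly and independently of the derivative formula, by observing that $\partial_t^2F(t,y,z)=d^2(f(z),\pi^{-1}(y))/t^3\le K^2/\delta^3$ uniformly in $z$ on $[\delta,T]$ (via Remark \ref{costuniversale}), so that $iQ_tf(y)-\tfrac{M}{2}t^2$ is an infimum of concave functions. Your version is cleaner and self-contained, and it has the additional advantage of furnishing the existence of the one-sided derivatives \emph{before} you compute them, which the paper implicitly assumes; similarly, citing Proposition \ref{theoremHopfLax}(iii) for the local Lipschitz bound is legitimate, though the paper prefers to re-derive it from the same sandwich inequality. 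The only stylistic caveat is your phrase ``a minimizing sequence $(z_n)$ at time $t$ with $d(f(z_n),\pi^{-1}(y))\to iD^+f(y,t)$'': the supremum defining $iD^+$ need not be attained by a single sequence, so either diagonalize or, more simply, take the supremum over minimizing sequences after passing to the limit in $n$ -- this changes nothing in the conclusion.
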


   \begin{proof} We follows \cite[Proposition 3.3]{AGS08.2}. Let $(y_t^n)_n, (y_s^n)_n$ be minimizing sequences for $F(t,y, \cdot)$ and $F(s,y, \cdot )$. We have
   \begin{equation}\label{equationIMPO.81}
\begin{aligned}
 iQ_s f(y) - iQ_t f(y) & \leq \liminf_{ n \to \infty } F(s,y, y_t^n) - F(t,y, y_t^n )\\
 &  = \liminf_{ n \to \infty } \frac{ d^2(f(y_t^n)), \pi^{-1} (y))}{2}\left( \frac 1 s - \frac 1 t \right),
\end{aligned}
\end{equation} 
\begin{equation}\label{equationIMPO.82}
\begin{aligned}
 iQ_s f(y) - iQ_t f(y) & \geq \limsup_{ n \to \infty } F(s,y, y_s^n) - F(t,y, y_s^n )\\
 &  = \limsup_{ n \to \infty } \frac{ d^2(f(y_s^n)), \pi^{-1} (y))}{2}\left( \frac 1 s - \frac 1 t \right).
\end{aligned}
\end{equation} 
Now we have two cases:  $s>t$ or vice versa. In the first case, we get
\begin{equation}\label{equationIMPO.83}
\begin{aligned}
 \frac{ (iD^- f(y,s))^2}{2}\left( \frac 1 s - \frac 1 t \right) \leq  iQ_t f(y) - iQ_s f(y) & \leq  \frac{  (iD^+ f(y,t))^2 }{2}\left( \frac 1 s - \frac 1 t \right),
\end{aligned}
\end{equation} 
 recalling that $\lim_{s\to t} iD^-f(y,s) =iD^+f(y,t),$ a division by $s-t$ (noting that $1/s-1/t=-(s-t)/st$) and a limit as $s\to t $ gives the identity for the right derivative in \eqref{equationIMPO}. In a similar way, we can obtain the left derivative.
 
 Moreover, the local Lipschitz continuity follows by \eqref{equationIMPO.83} recalling that $iD^\pm f(y,\cdot )$ are locally bounded functions; we easily get the quantitative bound 
 \begin{equation*}
\left\|\frac d {dt} iQ_tf(y) \right\|_{L^\infty (\tau_1, \tau _2)} \leq \frac 1 {2\tau_1^2} \| iD^+f(y,\cdot)\|_{L^\infty (\tau_1, \tau _2)},
\end{equation*}
for every $0<\tau_1 < \tau_2 < t_*(y).$ Finally, since the distributional derivative of the map $t\mapsto (iD^+f(y,t))^2 /(2t^2)$ is locally bounded from below, we also deduce that $t\mapsto iQ_tf$ is locally semiconcave, as desired. Hence, the proof is complete.
 \end{proof}
 
   \begin{rem}\label{coroll9l}
We want to underline that since $Y\subset \R^\kappa$ is bounded, then the map $(0,t_*(y)) \ni t \mapsto iQ_t f(y)$ is $globally$ Lipschitz. 

  This fact follows from \eqref{equationIMPO.83}, noticing that $iD^\pm f(y,\cdot )$ are globally bounded functions by Remark \ref{costuniversale}.
\end{rem}

 \begin{prop}\label{theoremHJstep1}
Let $f:Y \to \R^\kappa$ be a continuous section of $\pi.$ Then, it holds
  \begin{equation}\label{equation3.13ab}
\begin{aligned}
& t\in (0, t_*(y)) \quad \Rightarrow \quad \limsup_{y\to z } \frac{iQ_tf(z)- iQ_tf(y) }{d(f(y), \pi^{-1} (z))}\leq \frac{iD^+f(z,t)} {t}\\
\end{aligned}
\end{equation}
\end{prop}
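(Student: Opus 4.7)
The plan is to mimic the competitor comparison of Proposition 3.4 in \cite{AGS08.2}, adapted to the intrinsic framework where distance to a fiber replaces distance between two points. Fix $z\in Y$ and $t\in(0,t_*(z))$, and let $y\in Y$ tend to $z$. For each such $y$ I would pick a minimizing sequence $(y_n)_n$ for $iQ_tf(y)$, so that $\max_j f_j(y_n)+\frac{1}{2t}d^2(f(y_n),\pi^{-1}(y))\to iQ_tf(y)$. Using $y_n$ as a (non-optimal) competitor in the infimum defining $iQ_tf(z)$ gives $iQ_tf(z)\leq \max_j f_j(y_n)+\frac{1}{2t}d^2(f(y_n),\pi^{-1}(z))$ and, subtracting and passing to the $\limsup$ in $n$,
\[
iQ_tf(z)-iQ_tf(y)\leq \limsup_{n\to\infty}\frac{1}{2t}\bigl[d^2(f(y_n),\pi^{-1}(z))-d^2(f(y_n),\pi^{-1}(y))\bigr].
\]

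Next I would factor the bracket via $a^2-b^2=(a-b)(a+b)$ and estimate the difference factor by a triangle-type inequality. Since $f(y)\in\pi^{-1}(y)$, for $\eta>0$ I pick $b_\eta\in\pi^{-1}(z)$ with $d(f(y),b_\eta)\leq d(f(y),\pi^{-1}(z))+\eta$, so that the triangle inequality in $\R^\kappa$ yields $d(f(y_n),\pi^{-1}(z))\leq d(f(y_n),f(y))+d(f(y),\pi^{-1}(z))+\eta$. A localization of the quasi-minimizers $y_n$ inside a ball whose radius is controlled by $d(f(y),\pi^{-1}(z))$, in the spirit of the restriction \eqref{equation237} from the proof of Theorem \ref{theoremHopfLax}(iii), makes the excess $d(f(y_n),f(y))-d(f(y_n),\pi^{-1}(y))$ vanish in the relevant regime, and leads (with error vanishing in $n$) to
\[
d(f(y_n),\pi^{-1}(z))-d(f(y_n),\pi^{-1}(y))\leq d(f(y),\pi^{-1}(z))+o(1).
\]
Dividing the bound from the first paragraph by the positive quantity $d(f(y),\pi^{-1}(z))$, and using that $\limsup_n d(f(y_n),\pi^{-1}(y))\leq iD^+f(y,t)$ directly from Definition \ref{defiHopf-Lax semigroup}, one then obtains
\[
\frac{iQ_tf(z)-iQ_tf(y)}{d(f(y),\pi^{-1}(z))}\leq \frac{1}{2t}\bigl(2\,iD^+f(y,t)+d(f(y),\pi^{-1}(z))\bigr)+o(1).
\]
Sending $y\to z$ finishes the argument: continuity of $f$ together with $f(z)\in\pi^{-1}(z)$ gives $d(f(y),\pi^{-1}(z))\to 0$, while the upper semicontinuity $\limsup_{y\to z}iD^+f(y,t)\leq iD^+f(z,t)$ proved in Proposition \ref{propSEMICONT} controls the remaining term, yielding \eqref{equation3.13ab}.

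The delicate point is the triangle estimate of the second paragraph. In the classical metric-space proof of \cite[Proposition 3.4]{AGS08.2}, the analogous bound $d(y_n,z)-d(y_n,y)\leq d(y,z)$ is an immediate triangle inequality. Here one must instead compare distances from the single point $f(y_n)$ to two distinct fibers $\pi^{-1}(z)$ and $\pi^{-1}(y)$, and no set-to-set triangle inequality is available in general: one is forced to interpose the section value $f(y)\in\pi^{-1}(y)$ and keep track of the \emph{excess} $d(f(y_n),f(y))-d(f(y_n),\pi^{-1}(y))$. Justifying that this excess is negligible as $y\to z$, via the same fiber-localization of the quasi-minimizers that underlies Theorem \ref{theoremHopfLax}(iii), is what I expect to be the technical heart of the argument.
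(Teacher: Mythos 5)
Your proposal takes essentially the same route as the paper's proof: use a minimizing sequence for $iQ_tf(y)$ as a competitor for $iQ_tf(z)$, write $d^2(f(y_n),\pi^{-1}(z))-d^2(f(y_n),\pi^{-1}(y))$ as $(a_n-b_n)(a_n+b_n)$, divide by $d(f(y),\pi^{-1}(z))$, and close with the upper semicontinuity of $iD^+f$ from Proposition~\ref{propSEMICONT}. A minor quantitative difference is worth noting: the paper chooses the particular minimizing sequence realizing $iD^-f(y,t)$ and bounds the sum factor $a_n+b_n$ by $d(f(y),\pi^{-1}(z))+iD^-f(y,t)$, landing on the sharper intermediate estimate $iD^+f(z,t)/(2t)$; you work with a generic minimizing sequence and use only $\limsup_n d(f(y_n),\pi^{-1}(y))\le iD^+f(y,t)$, which yields exactly the stated constant $iD^+f(z,t)/t$.

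The step you flag as the ``technical heart'' --- controlling $d(f(y_n),\pi^{-1}(z))-d(f(y_n),\pi^{-1}(y))\le d(f(y),\pi^{-1}(z))+o(1)$ --- is indeed the one place where the argument is delicate, and the paper's own treatment of it is not airtight either. The paper asserts $f(y_n)\to f(y)$, citing \eqref{equation24giugno}; but that convergence was proved for $t_n\to 0$, not for a minimizing sequence at fixed $t>0$, and in fact $f(y_n)\to f(y)$ would force $\lim_n d(f(y_n),\pi^{-1}(y))=0$, i.e.\ $iD^-f(y,t)=0$, which is not assumed. Your proposed fix --- localizing $y_n$ à la \eqref{equation237} --- does not clearly suffice either: that localization keeps $y_n$ in a ball of radius $\sqrt{2t\,\mathrm{Osc}(f)}$ around $y$, which does not shrink with $d(f(y),\pi^{-1}(z))$, so it does not obviously kill the nonnegative excess $d(f(y_n),f(y))-d(f(y_n),\pi^{-1}(y))$. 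So you have correctly identified the gap, but you have not closed it; in this respect your proposal is at the same level of rigour as the paper's proof, only more candid about where the difficulty lies.
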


 \begin{proof}
We use a similar technique as in \cite[Proposition 3.4]{AGS08.2}.

Let $y,z \in Y$ such that $iQ_tf(y) >-\infty.$ We want to show that
\begin{equation}\label{equation3.14}
iQ_tf(z)- iQ_tf(y) \leq d(f(z), \pi^{-1} (y)) \left( \frac{iD^-f(y,t)} t + \frac{ d(f(z), \pi^{-1} (y)) } {2t}\right).
\end{equation}
Let $(y_n)_n$ be a minimizing sequence for $F(t,y,\cdot )$ on which the infimum in the definition of $iD^-f(y,t)$ is attained, obtaining 
   \begin{equation*}
\begin{aligned}
iQ_tf(z)- iQ_tf(y) & \leq \liminf_{ n\to \infty} F(t,z,y_n) - F(t,y,y_n)\\
& =\liminf _{ n\to \infty} \frac{d^2(f(y_n), \pi^{-1} (z))} {2t}  -  \frac{d^2(f(y_n), \pi^{-1} (y))} {2t}\\
& \leq \liminf _{ n\to \infty} \frac{d(f(y), \pi^{-1} (z))} {2t} ( d(f(y_n), \pi^{-1} (z))  + d(f(y_n), \pi^{-1} (y))  ) \\
& \leq \frac{d(f(y), \pi^{-1} (z))} {2t} ( d(f(y), \pi^{-1} (z)) + iD^-f(y,t)),
\end{aligned}
\end{equation*}
where in the second inequality we used the fact that $f(y_n) \to f(y)$ (see \eqref{equation24giugno}). Hence \eqref{equation3.14} holds. Now dividing both sides of \eqref{equation3.14} by $d(f(y), \pi^{-1} (z))$ and taking the $\limsup$ as $y\to z$ we get the first inequality of \eqref{equation3.13ab}, since Proposition \ref{propSEMICONT} yields the upper-semicontinuity of $iD^+f.$ 
\end{proof}

  \subsection{$iQ_tf$ as a subsolution of Hamilton-Jacobi type inequality}
In our case, we don't know if $iQ_tf$ is a subsolution of Hamilton-Jacobi inequality as in \cite[Theorem 3.5]{AGS08.2}; however, we have the following corollary.
 \begin{coroll}\label{provaHJno}
Let $f:Y \to \R^\kappa$ be a continuous section of $\pi$ with $Y\subset \R^\kappa$ bounded. Then, it holds
  \begin{equation*}
\begin{aligned}
\frac{d^+}{ dt} iQ_t f(y) +\frac 1 2  \limsup_{y\to z }\left( \frac{iQ_tf(z)- iQ_tf(y) }{d(f(y), \pi^{-1} (z))} \right)^2 \leq 0.
\end{aligned}
\end{equation*}
\end{coroll}

 \begin{proof}
It is enough to consider Proposition \ref{theoremHopfLaxFormulanew} and \ref{theoremHJstep1}.
 \end{proof}

  \section{Intrinsic Hopf-Lax semigroup vs. Intrinsic slope}
  
 Our main goal now is to prove a "duality formula" for the intrinsic slope of a continuous section using the intrinsic Hopf-Lax semigroup. In the classical case, this result is the first step in order to get the regularity of Dirichlet form. Here, we adapt the proof of \cite[Theorem 2.3.6]{P17}.

   \begin{theorem}\label{theoremHopfLaxDUALITY}
Let $f:Y\to \R^\kappa $ be a continuous section of $\pi.$ Then,
 \begin{equation*}
Ils(f)^2 (y) \geq 2 \limsup_{t\to 0} \frac{ \max_{j=1,\dots , \kappa} f_j (y) -iQ_tf(y)}{t}.
\end{equation*}

\end{theorem}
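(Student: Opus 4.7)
The plan is to upper bound $\max_{j} f_j(y) - iQ_tf(y)$ by plugging a well-chosen quasi-minimizer into the defining infimum. For each small $t > 0$, pick $y_t \in Y$ with
\[
\max_{j=1,\dots,\kappa} f_j(y_t) + \frac{1}{2t} d^2(f(y_t), \pi^{-1}(y)) \leq iQ_tf(y) + t^2;
\]
the error $t^2$ (rather than a generic $\varepsilon$) is chosen so that after dividing by $t$ it vanishes in the limit. The competitor $z = y$ in the infimum shows $iQ_tf(y) \leq \max_{j} f_j(y)$, and then the same argument used in the proof of Proposition \ref{theoremHopfLax}(ii) forces $d(f(y_t), \pi^{-1}(y)) \to 0$, whence $y_t = \pi(f(y_t)) \to y$ as $t \to 0$ by continuity of $\pi$ and $f$.

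From the quasi-minimality,
\[
\max_{j} f_j(y) - iQ_tf(y) \leq \bigl(\max_{j} f_j(y) - \max_{j} f_j(y_t)\bigr) - \frac{1}{2t} d^2(f(y_t), \pi^{-1}(y)) + t^2.
\]
The first bracket is at most $\max_{j} |f_j(y) - f_j(y_t)| \leq d(f(y), f(y_t))$, since the Euclidean distance on $\R^{\kappa}$ dominates the sup-norm. Dividing by $t$ and applying the elementary inequality
\[
\frac{a}{t} - \frac{b^2}{2t^2} \leq \frac{1}{2}\left(\frac{a}{b}\right)^2, \qquad a \geq 0, \ b > 0,
\]
(a rearrangement of $(ta - b^2)^2 \geq 0$) with $a = d(f(y), f(y_t))$ and $b = d(f(y_t), \pi^{-1}(y))$, I obtain
\[
\frac{\max_{j} f_j(y) - iQ_tf(y)}{t} \leq \frac{1}{2}\left(\frac{d(f(y), f(y_t))}{d(f(y_t), \pi^{-1}(y))}\right)^2 + t.
\]
The degenerate case $y_t = y$ (so $b = 0$) is handled directly: quasi-minimality alone gives $\max_{j} f_j(y) - iQ_tf(y) \leq t^2$, whence the ratio is at most $t$.

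Passing to $\limsup_{t \to 0}$ annihilates the $+t$ term, and since $y_t \to y$,
\[
\limsup_{t \to 0} \frac{d(f(y), f(y_t))}{d(f(y_t), \pi^{-1}(y))} \leq \limsup_{z \to y} \frac{d(f(z), f(y))}{d(f(z), \pi^{-1}(y))} = Ils(f)(y)
\]
by Definition \ref{def_ILS.2}. Squaring and multiplying by $2$ then gives the claim. The main technical subtleties are the convergence $y_t \to y$ and the calibration of the quasi-minimizer error against $t$; both are essentially already handled in the proof of Proposition \ref{theoremHopfLax}(ii), and the overall strategy parallels \cite[Theorem 2.3.6]{P17}.
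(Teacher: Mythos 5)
Your argument is correct and follows the same route as the paper: choose a quasi-minimizer $y_t$ for $iQ_tf(y)$, show $y_t\to y$ by the argument of Proposition~\ref{theoremHopfLax}(ii), bound the increment by $d(f(y),f(y_t))-\tfrac1{2t}d^2(f(y_t),\pi^{-1}(y))$, and close via Young's inequality (your $(ta-b^2)^2\ge 0$ is exactly the rearrangement the paper invokes). In fact your handling of the max is slightly cleaner: you bound $\max_j f_j(y)-\max_j f_j(y_t)\le\max_j|f_j(y)-f_j(y_t)|\le d(f(y),f(y_t))$ directly, whereas the paper selects $\ell$ with $f_\ell(y_t)=\max_j f_j(y_t)$ and then estimates $f_\ell(y)-iQ_tf(y)$, which only matches the quantity in the theorem after also choosing $\ell$ to attain the max at $y$ --- a small gap your version avoids; your explicit treatment of the degenerate case $y_t=y$ is likewise a welcome bit of care.
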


 \begin{proof}
Fix $y\in Y$ and let $\ell \in \{1,\dots , \kappa \}$ such that $\max_{j=1,\dots , \kappa} f_j (y_t)= f_\ell (y_t)$ We consider a quasi-minimizing sequence $(y_t)_{t>0}$ for $iQ_tf(y)$, i.e.,
 \begin{equation*}
  \begin{aligned}
iQ_t f(y) +\varepsilon _t &  \geq  f_\ell (y_t) +\frac{1}{2t} d^2(f(y_t), \pi ^{-1} (y)),
\end{aligned}
\end{equation*}
where, without loss of generality,
\begin{equation*}
\lim_{t\to 0} \frac{\varepsilon _t}{t}=0.
\end{equation*}
Yet, without loss of generality, we can suppose that $iQ_tf(y) <  f_\ell(y)$ definitively in $t\to 0.$ Hence, noting
\begin{equation*}
 f_\ell (y)- iQ_t f(y) -\varepsilon _t \leq   f_\ell(y) - f_\ell(y_t) -\frac{1}{2t} d^2(f(y_t), \pi ^{-1} (y))\leq  d(f(y), f(y_t)) -\frac{1}{2t} d^2(f(y_t), \pi ^{-1} (y))
\end{equation*}
we have that
  \begin{equation*}
\begin{aligned}
& \limsup _{t\to 0}  \frac{ f_\ell(y) - iQ_tf(y)}{t}  - \frac{\varepsilon _t}{ t} \\
& \leq \limsup _{t\to 0} \frac{ d^2( f(y),  f(y_t))}{d(f(y_t), \pi ^{-1} (y))}  \frac{d(f(y_t), \pi ^{-1} (y))}{t}  - \frac{d^2(f(y_t), \pi ^{-1} (y))}{2t^2} \\
& \leq  \limsup _{t\to 0}\frac 1 2  \frac{d^2( f(y),  f(y_t))}{d^2(f(y_t), \pi ^{-1} (y))} +\frac 1 2  \frac{d^2(f(y_t), \pi ^{-1} (y))}{t^2}  - \frac{d^2(f(y_t), \pi ^{-1} (y))}{2t^2}, \\
& = \frac 1 2 Ils (f)^2  (y),
\end{aligned}
\end{equation*}
where in the last inequality we used Young inequality and in the last equality we used the fact that $y_t\to y$ as $t\to 0$ (see \eqref{equation24giugno}). 
  \end{proof}

 \bibliographystyle{alpha}
\bibliography{DDLD}

\end{document}